\newtheorem{thm}{Theorem}[section]
\newtheorem{prop}[thm]{Proposition}
\newtheorem{lem}[thm]{Lemma}
\newtheorem{cor}[thm]{Corollary}
\theoremstyle{definition}
\newtheorem{definition}[thm]{Definition}
\newtheorem{example}[thm]{Example}
\theoremstyle{remark}
\newtheorem{remark}[thm]{Remark}
\numberwithin{equation}{section}
\newcommand{\bQ}{\mathbb{Q}}
\newcommand{\bN}{\mathbb{N}}
\newcommand{\bZ}{\mathbb{Z}}
\newcommand{\bP}{\mathbb{P}}
\newcommand\OO{{\mathcal{O}}}
\newcommand\mld{\text{\rm mld}}
\newcommand\XX{{X^{\prime}}}
\newcommand{\mm}{m^{\prime}}
\newcommand{\pX}{{\pi^*(K_X)}}
\newcommand{\pH}{\pi^*(H)}
\newcommand{\mult}{\operatorname{mult}}
\newcommand{\Supp}{\operatorname{Supp}}
\newcommand{\vol}{\operatorname{vol}}
\begin{document}

\title{ON EXPLICIT BIRATIONAL GEOMETRY FOR POLARISED VARIETIES}
\date{\today}

\author{MINZHE ZHU}
\address{MINZHE ZHU, School of Mathematical Sciences, Fudan University, Shanghai, 200433, China}
\email{zhumz20@fudan.edu.cn}


\begin{abstract}
In this paper, we investigate the explicit birational geometry for projective $\epsilon$-lc varieties polarised by nef and big Weil divisors. We show that if $X$ is a projective $\epsilon$-lc variety, $H$ is a nef and big Weil divisor with $\dim\overline{\varphi_{H}(X)}\geq n-1$ and $L$ is an effective Weil divisor such that $|L-K_X|\neq \emptyset$ or $L-K_X$ is nef, then we can find an explicit lower bound of $\vol(H)$ and prove that $|L+\mm H|$ is birational for $\mm\geq m$, where $m$ is an explicit number which depends only on $n$ and $\epsilon$. This result can be applied to polarised Calabi-Yau varieties, Fano varieties and varieties of general type, generalizing the results in \cite{CEW22} and \cite{ZMZ23a}.
\end{abstract}

\keywords{polarised variety, volume, birational stability, boundedness}
\subjclass[2020]{14J32, 14J40, 14J45}
\maketitle
\pagestyle{myheadings} \markboth{\hfill M.Z.~Zhu
\hfill}{\hfill On explicit birational geometry for polarised varieties \hfill}

\section{Introduction}
Throughout this article, we work over the field of complex numbers $\mathbb{C}$.

Given a projective $\epsilon$-lc variety $X$ of dimension $n$ and a nef and big Weil divisor $H$, there are two central questions in birational geometry:
\begin{enumerate}
\item[(i)] the universal lower bound of $\vol(H)$;
\item[(ii)] the universal $m$ such that $|mH|$ and $|K_X+mH|$ are birational. 
\end{enumerate}
Especially, the following three cases are the most interesting:
\begin{enumerate}
\item $H=K_X$ is nef and big, i.e. $X$ is a variety of general type;
\item $H=-K_X$ is nef and big, i.e. $X$ is a weak Fano variety;
\item $K_X\equiv0$ and $H$ is  nef and big, i.e. $(X,H)$ is a polarised Calabi-Yau variety.
\end{enumerate}

When $n\leq3$ and $X$ has at worst terminal singularities, there are a lot of research relating to the above three cases; see for example \cite{1973Canonical,Reider1988VectorBO} for surface case,\cite{CC08Fano,Jiang16,Chen18ON,JZFano3} for threefold case.

As for arbitrary dimensions, given that $H-K_X$ is pseudo-effective, the existence of the universal lower bound of volume and the universal birationality of polarised varieties are guaranteed by the following remarkable theorem given by Birkar \cite{Birkarpolarised}.

\begin{thm}\cite[Theorem 1.1]{Birkarpolarised}
Let $d$ be a natural number and $\epsilon$ be a positive real number. Then there exists a natural number $m$ depending only on $d,\epsilon$ satisfying the following. Assume that 
\begin{itemize}
\item $X$ is a projective $\epsilon$-lc variety of dimension $d$,
\item $H$ is a nef and big integral divisor on $X$, and 
\item $H-K_X$ is pseudo-effective.
\end{itemize}
Then $|m^\prime H+L|$ and $|K_X+m^\prime H+L|$ define birational maps for any natural number $m^\prime\geq m$ and any integral pseudo-effective divisor $L$.
\end{thm}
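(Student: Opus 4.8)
The plan is to reduce the statement to a single effective-birationality assertion of Nadel type and then to supply the two quantitative inputs it requires: a uniform positive lower bound for $\vol(H)$, and uniform control of the local volumes of multiples of $H$ at general points.

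\emph{Reduction.} First I would note that both linear systems in the conclusion are of the shape $|K_X+D|$ with $D$ big. Indeed $m'H+L$ is big once $m'\ge 1$; and $m'H+L=K_X+\big((m'-1)H+L+(H-K_X)\big)$, where $L$ and $H-K_X$ are pseudo-effective, so for $m'\ge 2$ the divisor in brackets is big as well. Thus it suffices to find $m_0=m_0(d,\epsilon)$ such that $|K_X+kH+P|$ is birational whenever $P$ is a pseudo-effective integral divisor and $k\ge m_0$; since all divisors are integral, the nonzero sections needed will come for free from the Nadel vanishing used below, so that $P$ is only pseudo-effective (and not necessarily effective) causes no trouble.

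\emph{Volume lower bound.} The heart of the matter is $\vol(H)\ge\delta(d,\epsilon)>0$, which I would prove by induction on $d$. Since $H-K_X$ is pseudo-effective, running a $K_X$-MMP is harmless and terminates with a minimal model or a Mori fibre space; in either outcome the Fano-type varieties and fibrations that occur are $\epsilon$-lc of dimension at most $d$, hence lie in a bounded family by the BAB theorem together with boundedness of $\epsilon$-lc Fano-type fibrations, and their orbifold boundaries have coefficients in a fixed finite set by the theory of bounded complements and the canonical bundle formula. On such a bounded family a nef and big Weil divisor is determined, up to bounded ambiguity, by finitely many numerical data, so its volume is bounded below; transporting this estimate back along the MMP and down to a general fibre---where the inductive hypothesis applies after adjunction, which keeps the singularity coefficient controlled---yields $\vol(H)\ge\delta(d,\epsilon)$.

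\emph{Effective birationality and the obstacle.} With $\delta$ fixed I would take $m_0$ so large that $m_0^{\,d}\,\delta$ exceeds the Hacon--McKernan--Xu volume-versus-multiplicity threshold (of order $d^d$), leaving a further margin for local volumes. For general $x_1,x_2\in X$ and $k\ge m_0$, set $D=kH+P$, so $\vol(D)\ge k^d\vol(H)\ge k^d\delta$ is huge; the HMX machinery then produces $\Delta\sim_{\mathbb Q}\mu D$ with $\mu<1$ whose minimal non-klt centre through $x_1$ is $x_1$ itself and which is non-klt near $x_2$ as well, cutting that centre down by adjunction (and the inductive control of the restricted polarisation) if it has positive dimension. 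Nadel vanishing applied to $K_X+\Delta+(\text{the remaining big part of }D)$ then yields sections of $K_X+D$ vanishing at $x_2$ but not at $x_1$; together with the reduction above this gives both conclusions of the theorem. The real obstacle is the volume lower bound: there is no soft reason for $\vol(H)$ to stay away from $0$, and every known route passes through the deep boundedness machinery (BAB, boundedness of complements, boundedness of $\epsilon$-lc Fano fibrations) plus an induction that must simultaneously track how adjunction degrades the coefficient $\epsilon$ so that the inductive hypothesis remains usable; extracting \emph{explicit} constants from all of this---precisely the aim of the present paper in the cases it treats---is what the general theorem does not deliver.
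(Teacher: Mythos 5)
This theorem is quoted from Birkar's paper \cite{Birkarpolarised} and is \emph{not} proved in the present paper; immediately after the statement the author explicitly says that Birkar proved it using potential birationality of divisors and properties of bounded families, and that this route is not practical for extracting explicit constants. So there is no in-paper proof to compare yours against; the theorem is cited as a black box, and the whole point of the paper is to get explicit bounds in two special regimes where the general machinery is avoidable.

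As an outline of Birkar's actual proof, your sketch is recognizable: reduce to effective birationality for $K_X+(\text{big})$, obtain a uniform volume lower bound for $H$ by induction through the MMP and the boundedness package (BAB, boundedness of complements, boundedness of $\epsilon$-lc Fano fibrations), then cut down non-klt centres Hacon--McKernan--Xu style and apply Nadel vanishing. Your closing concession, that the bottleneck is the non-explicit volume lower bound, is exactly the gap the present paper fills in the cases $\dim\overline{\varphi_H(X)}\ge n-1$. However, a few intermediate claims would not survive contact with the details. The assertion that on a bounded family a nef and big Weil divisor "is determined, up to bounded ambiguity, by finitely many numerical data, so its volume is bounded below" is false as stated: a bounded family can carry nef and big Weil divisors of arbitrarily small volume. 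What rescues the argument is that on an $\epsilon$-lc bounded family the Cartier index is bounded, so a fixed multiple $rH$ is Cartier, and it is for Cartier nef and big divisors on a bounded family that one gets a uniform volume lower bound. Likewise, "running a $K_X$-MMP is harmless" suppresses that one needs an MMP on a suitable pair $(X,B)$ built from the hypothesis $H-K_X$ pseudo-effective, and that the $\epsilon$-lc condition degrades under adjunction in a way that must be tracked for the induction to close. Finally, dismissing the pseudo-effective $P$ as causing "no trouble" is too quick; one must pass to a model where the positive part splits as ample plus effective before the vanishing theorem applies. None of this changes the verdict that your sketch is a fair description of Birkar's strategy, but it leans entirely on deep boundedness theorems that you invoke rather than prove, which is the very feature the present paper is written to sidestep.
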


However, Birkar used potential birationality of divisors and properties of bounded family in \cite{Birkarpolarised} to prove this theorem, which is not a practical way to calculate the explicit bounds of volume and birationality. Hence it is natural to ask under what conditions can we find the explicit universal bounds of volume and birationality.

The first result was given by Chen-Esser-Wang. In \cite{CEW22}, they gave the optimal bounds of volume and canonical stability for minimal projective $n$-folds of general type with canonical dimension $n$ or $n-1$. Later, in \cite{ZMZ23a} the author used a similar method to give the optimal bounds for weak Fano varieties and polarised Calabi-Yau varieties.

However, these results only applied to varieties with at worst canonical singularities, and the canonical divisor has strict numerical restriction.

In this article, we will estimate the explicit bounds of volume and birationality for a general polarised variety $(X,H)$ with $\epsilon$-lc singularities and
$\dim \overline{\varphi_H(X)}\geq \dim X-1$.

The main results of this paper are the following:
\begin{thm}\label{mainthm1}
Let $X$ be a projective $\epsilon$-lc variety of dimension $n$. Let $H$ be a nef and big Weil divisor on $X$ and $L$ be an effective Weil divisor satisfying one of the following conditions:
\begin{enumerate}
\item[(i)] $|L-K_X|\neq\emptyset$;
\item[(ii)]$L-K_X$ is nef.
\end{enumerate}

Then each of the following conclusions holds:
\begin{enumerate}
\item If $\dim\overline{\varphi_{H}(X)}= n$ and $\varphi_H$ is not birational, then ${\rm vol}(H)\geq 2$ and $|L+mH|$ is birational for $m\geq n+1$;
\item If $\dim\overline{\varphi_{H}(X)}=n-1$,
then $\vol(H)\geq \frac{\epsilon}{n}$ and $|L+mH|$ is birational for $m\geq n+\lfloor\frac{2n}{\epsilon}\rfloor$.
\end{enumerate}
\end{thm}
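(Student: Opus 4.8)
The plan is to follow the classical strategy of Tankeev--Kollár--Chen for birationality of linear systems, adapted to the polarised $\epsilon$-lc setting. The overall idea is to separate two general points $x_1,x_2\in X$ by sections of $|L+mH|$, which reduces to two things: producing a non-klt (indeed isolated) center at $x_1$ while keeping $x_2$ out of the non-klt locus, and then lifting sections. First I would pass to a log resolution and replace $X$ by a sufficiently general member of a pencil in $|H|$ when $\dim\overline{\varphi_H(X)}\ge n-1$, setting up an induction on dimension: the fibration $\varphi_H$ (or a pencil inside it) cuts out divisors $S$ on which $H|_S$ is again nef and big with image dimension $\ge \dim S -1$, and $(S, \mathrm{Diff})$ inherits controlled singularities by adjunction. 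This is exactly where the hypothesis $\dim\overline{\varphi_H(X)}\ge n-1$ is used, and where the volume bound enters quantitatively: in case (1) the generic fibre of $\varphi_H$ when it is not birational forces $\deg$ at least $2$ on the image, giving $\vol(H)\ge 2$; in case (2) the $(n-1)$-dimensional image together with $\epsilon$-lc control of the general fibre curve gives $\vol(H)\ge \epsilon/n$.

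Next I would run the standard ``create a non-klt center'' step. Using $\vol(H)$ bounded below and a Riemann--Roch / asymptotic estimate, for $t$ slightly larger than $n/\vol(H)^{1/n}$-type quantity one produces an effective $\bQ$-divisor $D\sim_{\bQ} tH$ with a non-klt center at $x_1$; the $\epsilon$-lc hypothesis on $X$ lets one perturb and tie-break so that the non-klt locus has an isolated point at $x_1$ not containing $x_2$, and the coefficient count translates into the numbers $n+1$ and $n+\lfloor 2n/\epsilon\rfloor$. Then apply Nadel vanishing (Kawamata--Viehweg) to the multiplier ideal of $D+(\text{something})H$: since $L-K_X$ is either effective or nef, $L+mH-K_X-D$ is nef and big for $m$ in the stated range, so $H^1$ vanishes and a section of $|L+mH|$ separates $x_1$ from $x_2$. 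Running this for general $x_1,x_2$ gives birationality.

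The induction is set up so that the base cases are $\dim X=1$ (where $|L+mH|$ with $\deg H\ge 1$ is immediate) and $\dim X = 2$ with $\dim\overline{\varphi_H(X)}=1$, i.e. $H$ defines a pencil of curves — here one uses surface adjunction and the $\epsilon$-lc assumption on the general fibre curve directly. The passage from $S$ back to $X$ requires lifting sections from $S$ to $X$, which again is a vanishing-theorem argument ($H^1(X, L+mH-S)=0$ because $L+mH-S-K_X$ is nef and big), plus the fact that $H|_S$ remains a Weil divisor with the right positivity — one must be slightly careful that $S$ can be chosen with mild singularities, using a general member and Bertini.

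The hard part will be the quantitative bookkeeping in the inductive step: controlling how the coefficient of the non-klt center, the volume lower bound, and the singularity invariant $\epsilon$ all degrade (or not) when restricting to $S$, and verifying that the recursion closes with precisely $m\ge n+1$ in case (1) and $m\ge n+\lfloor 2n/\epsilon\rfloor$ in case (2) rather than something weaker. In particular one must ensure the ``$2$'' and the ``$\epsilon/n$'' volume bounds are genuinely the worst case and propagate correctly through adjunction, and that the tie-breaking (cutting down the non-klt center to a point) can always be performed within the allotted multiple of $H$. A secondary technical point is handling the case where $\varphi_H$ is birational onto its image but $|H|$ itself is not yet birational, and the case where $L-K_X$ is only nef (not effective), where one replaces $|L-K_X|$-membership arguments by pure nefness in all the vanishing inputs.
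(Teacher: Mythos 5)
Your proposal follows the Tankeev--Koll\'ar--Chen strategy (create an isolated non-klt center, apply Nadel/Kawamata--Viehweg vanishing to separate points), whereas the paper's proof is quite different and more elementary: it passes to a log resolution $\pi:X'\to X$ with $\pi^*H=M+N$, slices $X'$ by $n-1$ general members of $|M|$ down to a curve $Z_1$, and runs everything through the Birationality Principle (Proposition~\ref{BP}) plus Kawamata--Viehweg vanishing along the chain $Z_1\subset\cdots\subset Z_n=X'$. The point of that slicing is that both the volume bound and the birationality threshold collapse to controlling a single number, $\deg_{Z_1}\pi^*H$, and the $\epsilon$-lc hypothesis enters only through the multiplicities of the $\pi$-exceptional divisors along $Z_1$.

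There are two substantive gaps in your sketch. First, the volume bound $\vol(H)\geq\epsilon/n$ in case (2) is asserted to follow from ``$\epsilon$-lc control of the general fibre curve,'' but this is in fact the crux of the theorem and you give no argument for it. The paper proves it via a dedicated lemma (Lemma~\ref{component}: any $\pi$-exceptional prime divisor horizontal over $T=\overline{\varphi_H(X)}$ must lie in $\Supp N$), followed by an arithmetic trick with the denominators of $\mult_{x_i}\pi^*H|_{Z_1}$ (take $m=\lambda Q+n$ with $Q$ the common denominator so that $\lceil(m-n+1)\pi^*H\rceil|_{Z_1}$ has no rounding loss), which together with the restriction inequality yields $\deg_{Z_1}\pi^*H\geq\epsilon/n$. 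None of this appears in your proposal, and without it the whole argument is floating.

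Second, the non-klt-center route, as you sketch it, would not land on $m\geq n+\lfloor 2n/\epsilon\rfloor$. A naive section count with $\vol(H)\geq\epsilon/n$ creates a non-klt center at cost $t$ of order $n\,(n/\epsilon)^{1/n}$, which is vastly smaller than $2n/\epsilon$ for small $\epsilon$; since Example 7.3 of the paper shows the stated bound is essentially sharp, the dominant term $2n/\epsilon$ must come from somewhere else, namely from the tiny degree $\deg_{Z_1}H=\epsilon/n$ of $H$ on the curve on which one ultimately separates points. Your sketch neither carries out the tie-breaking/cut-down step in a way that produces this linear-in-$1/\epsilon$ dependence, nor addresses how $\epsilon$-lc-ness would descend by adjunction to a general member $S$ of a pencil in the Weil (not Cartier) linear system $|H|$ --- a genuinely delicate point, which the paper simply avoids by working on the smooth resolution $X'$ and restricting only to smooth complete intersections $Z_i$ of members of the base-point-free $|M|$. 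The framework you invoke is standard, but the quantitative bookkeeping that constitutes the actual content of the theorem is missing.
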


\begin{remark}
We can use this theorem to prove birationality as following: Let $H$ be a nef and big Weil divisor on a projective $\epsilon$-lc variety $X$ of dimension $n$ and we want to know when $|mH|$ or $|K_X+mH|$ will be birational. If we know that $|tH|$ induces a rational map from $X$ to an image of dimension $\geq n-1$, then we consider the following two special cases: If $|K_X+H|\neq \emptyset$, then we take the effective divisor $L\in |K_X+H|$ and conclude that $|K_X+(1+mt)H|$ is birational for $m\geq n+\lfloor \frac{2n}{\epsilon}\rfloor$; If $tH-K_X$ is nef, then we take the effective divisor $L\in|tH|$ and conclude that $|mtH|$ is birational for $m\geq n+\lfloor \frac{2n}{\epsilon}\rfloor+1$. (Roughly speaking, from the image of dimension $\geq n-1$ to birational image, we only need to increase the divisor by $n+\lfloor \frac{2n}{\epsilon}\rfloor+1$ times.)
\end{remark}

We sketch the proof of Theorem \ref{mainthm1} here. Given a projective $\epsilon$-lc variety of dimension $n$, a nef and big Weil divisor $H$ with $\dim \overline{\varphi_H(X)}\geq n-1$, and an effective Weil divisor $L$ satisfying Condition $(i)$ or $(ii)$. First we take a sufficiently high resolution $\pi:\XX \to X$ such that $\pH=M+N$, where $M$ is the base point free moving part and $N$ is the fixed part. 

The strategy for estimating the volume of $H$ and the birationality of $|L+mH|$ is to slice $\XX$ by general members of $|M|$ for $n-1$ times and get a chain of smooth varieties $Z_1\subseteq Z_2\subseteq \cdots \subseteq Z_n=\XX$ with $\dim Z_i=i$. By basic calculation of intersection number and Birationality Principle \ref{BP}, we can reduce both problems to the curve $Z_1$. The remaining question is to estimate the lower bound of $\deg_{Z_1}\pH$, which is the most difficult part in the proof. We will deal with this tricky part in Subsection \ref{dim=n-1}.
 ~\\

As corollaries, we apply this theorem to the following three cases:
\begin{enumerate}
\item $K_X$ is nef and big and $H=lK_X$ for some $l\in \bN$;
\item $-K_X$ is nef and big and $H=-lK_X$ for some $l\in \bN$; 
\item $K_X\equiv0$ and $H$ is a nef and big Weil divisor on $X$.
\end{enumerate}

\begin{cor}\label{general type}
Let $X$ be a projective $\epsilon$-lc variety of dimension $n$ with $K_X$ nef and big and $l$ be a positive integer.
\begin{enumerate} 
\item If $\dim\overline{\varphi_{lK_X}(X)}= n$ and $\varphi_{lK_X}$ is not birational, then ${\rm vol}(K_X)\geq \frac{2}{l^n}$ and $|mlK_X|$ is birational for $m\geq n+2$; 
\item If $\dim\overline{\varphi_{lK_X}(X)}=n-1$,
then $\vol(K_X)\geq \frac{\epsilon}{nl^n}$ and $|mlK_X|$ is birational for $m\geq n+\lfloor\frac{2n}{\epsilon}\rfloor+1$.
\end{enumerate}
\end{cor}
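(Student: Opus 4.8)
The plan is to derive Corollary \ref{general type} from Theorem \ref{mainthm1} by the obvious specialisation $H = lK_X$ and $L = (l+1)K_X$ (or simply $L = lK_X$), keeping track of how the volume of $H$ relates to the volume of $K_X$ and how the birationality statement for $|L+mH|$ translates into one for a multiple of $K_X$.

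First I would check that the hypotheses of Theorem \ref{mainthm1} are met. Since $K_X$ is nef and big and $X$ is $\epsilon$-lc, $H := lK_X$ is a nef and big Weil divisor. For the auxiliary divisor, note that $lK_X - K_X = (l-1)K_X$ is nef (it is a non-negative multiple of the nef divisor $K_X$), so $L := lK_X$ satisfies condition (ii) of Theorem \ref{mainthm1}; alternatively, to land on a statement about $|mlK_X|$ directly one takes $L = lK_X$ and observes $mH + L = (ml + l)K_X$, but it is cleaner to take $L = lK_X$ and absorb it, giving $|L+mH| = |(m+1)lK_X|$. The key numerical translation is $\vol(H) = \vol(lK_X) = l^n \vol(K_X)$, because volume is homogeneous of degree $n = \dim X$ in the divisor. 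Hence a lower bound $\vol(H) \geq c$ from Theorem \ref{mainthm1} becomes $\vol(K_X) \geq c/l^n$, which yields the claimed bounds $\frac{2}{l^n}$ in case (1) and $\frac{\epsilon}{nl^n}$ in case (2).

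For the birationality statements, in case (1) Theorem \ref{mainthm1}(1) gives that $|L + mH|$ is birational for $m \geq n+1$; with $L = lK_X$ and $H = lK_X$ this is $|(m+1)lK_X|$, so writing $m' = m+1$ we get $|m'lK_X|$ birational for $m' \geq n+2$, as stated. In case (2), Theorem \ref{mainthm1}(2) gives $|L+mH|$ birational for $m \geq n + \lfloor \tfrac{2n}{\epsilon}\rfloor$; the same reindexing gives $|m'lK_X|$ birational for $m' \geq n + \lfloor \tfrac{2n}{\epsilon}\rfloor + 1$, matching the statement. One should double-check that $\varphi_H = \varphi_{lK_X}$ is precisely the map whose image dimension is being hypothesised, which is immediate since $H = lK_X$.

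There is no genuine obstacle here: the corollary is a direct translation, and the only points requiring a word of care are (a) the homogeneity of the volume function, $\vol(lD) = l^n\vol(D)$ for a big divisor $D$ on an $n$-dimensional variety, and (b) the bookkeeping of the shift by one in the multiple of $K_X$ coming from the choice of $L$. The mildly delicate choice is which $L$ to use: taking $L = lK_X$ (so $L - K_X = (l-1)K_X$ nef) is the most economical, and it forces the extra ``$+1$'' in the birationality range relative to Theorem \ref{mainthm1}. I would present the argument in a few lines, invoking Theorem \ref{mainthm1} with this $H$ and $L$ and then simplifying $\vol(lK_X)$ and $(m+1)lK_X$.
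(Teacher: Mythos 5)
Your proposal is correct and takes essentially the same route as the paper: specialise Theorem~\ref{mainthm1} with $H=lK_X$ and an effective $L\in|lK_X|$ (so $L-K_X\sim(l-1)K_X$ is nef, giving condition (ii)), then use $\vol(lK_X)=l^n\vol(K_X)$ and reindex $L+mH=(m+1)lK_X$ to produce the shift by one in the birationality range. The paper's proof is just a terser version of this.
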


\begin{cor}\label{Fano}
Let $X$ be a projective $\epsilon$-lc variety of dimension $n$ with $-K_X$ nef and big and $l$ be a positive integer.
\begin{enumerate} 
\item If $\dim\overline{\varphi_{-lK_X}(X)}= n$ and $\varphi_{-lK_X}$ is not birational, then ${\rm vol}(-K_X)\geq \frac{2}{l^n}$ and $|-mlK_X|$ is birational for $m\geq n+1$; 
\item If $\dim\overline{\varphi_{-lK_X}(X)}=n-1$,
then $\vol(-K_X)\geq \frac{\epsilon}{nl^n}$ and $|-mlK_X|$ is birational for $m\geq n+\lfloor\frac{2n}{\epsilon}\rfloor$.
\end{enumerate}
\end{cor}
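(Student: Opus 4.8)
The plan is to deduce Corollary~\ref{Fano} directly from Theorem~\ref{mainthm1} by a change of polarisation. Set $H = -lK_X$, which is a nef and big Weil divisor since $-K_X$ is nef and big and $l \in \bN$. Observe that $\varphi_H = \varphi_{-lK_X}$, so the hypothesis $\dim\overline{\varphi_{-lK_X}(X)} \ge n-1$ matches the hypothesis of Theorem~\ref{mainthm1} on $H$. The one genuinely new point is to produce the auxiliary effective divisor $L$: here I would take $L = 0$, the zero divisor, which is certainly an effective Weil divisor. Then $L - K_X = -K_X$ is nef by assumption, so Condition~(ii) of Theorem~\ref{mainthm1} is satisfied. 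Thus Theorem~\ref{mainthm1} applies verbatim with this $(X, H, L)$.

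Next I would translate the conclusions. For the volume statement, note $\vol(H) = \vol(-lK_X) = l^n \vol(-K_X)$ since volume is homogeneous of degree $n$ in the divisor (here one uses that $-K_X$ is big, so $\vol(-K_X) = (-K_X)^n$ up to the usual birational pullback, and scaling by $l$ multiplies the top self-intersection by $l^n$). In case~(1), Theorem~\ref{mainthm1}(1) gives $\vol(H) \ge 2$, hence $\vol(-K_X) = \vol(H)/l^n \ge 2/l^n$; in case~(2), Theorem~\ref{mainthm1}(2) gives $\vol(H) \ge \epsilon/n$, hence $\vol(-K_X) \ge \epsilon/(nl^n)$. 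For the birationality statement, $|L + mH| = |m(-lK_X)| = |-mlK_X|$, and Theorem~\ref{mainthm1} tells us this is birational for $m \ge n+1$ in case~(1) and for $m \ge n + \lfloor 2n/\epsilon\rfloor$ in case~(2). These are exactly the asserted bounds.

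One subtlety worth spelling out is the exact form of the numerical threshold. In the general type corollary the birationality bound is $n+\lfloor 2n/\epsilon\rfloor + 1$, one larger than in the Fano corollary, because there $L \in |lK_X|$ is a nonzero effective divisor forced by taking $H = lK_X$, whereas in the Fano case we are free to choose $L = 0$ and so avoid the extra twist. I would make explicit that taking $L=0$ is legitimate (the empty effective divisor), that Condition~(ii) only requires $L - K_X = -K_X$ nef, and that no appeal to Condition~(i) — which would demand $|{-K_X}| \ne \emptyset$ and is false in general — is needed.

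I do not expect any real obstacle: the corollary is a formal specialisation of the main theorem, and the only things to verify are the homogeneity of volume under $H \mapsto lH$ and the admissibility of $L = 0$, both routine. The only place to be slightly careful is matching the image-dimension hypothesis — one must note $\overline{\varphi_{-lK_X}(X)}$ is literally the object appearing in Theorem~\ref{mainthm1} applied to $H = -lK_X$, with no rescaling of the linear system needed since $\varphi_{mH}$ for the relevant $m$ is what the theorem controls.
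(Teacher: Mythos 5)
Your proposal is correct and matches the paper's proof exactly: take $H=-lK_X$, $L=0$, note $L-K_X=-K_X$ is nef so Condition~(ii) of Theorem~\ref{mainthm1} applies, then translate via $\vol(-lK_X)=l^n\vol(-K_X)$ and $|L+mH|=|-mlK_X|$. Your added remarks on the homogeneity of volume and on why the Fano threshold is one smaller than the general-type one are accurate and simply make explicit what the paper leaves implicit.
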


\begin{cor}\label{CY}
Let $X$ be a projective $\epsilon$-lc variety of dimension $n$ with $K_X\equiv 0$, $H$ be a nef and big Weil divisor on $X$ and $l$ be a positive integer.
\begin{enumerate} 
\item If $\dim\overline{\varphi_{lH}(X)}= n$ and $\varphi_{lH}$ is not birational, then $\vol(H)\geq \frac{2}{l^n}$ and $|mlH|$ is birational for $m\geq n+1$; 
\item If $\dim\overline{\varphi_{lH}(X)}=n-1$,
then $\vol(H)\geq \frac{\epsilon}{nl^n}$ and $|mlH|$ is birational for $m\geq n+\lfloor\frac{2n}{\epsilon}\rfloor$.
\end{enumerate}
\end{cor}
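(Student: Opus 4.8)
The plan is to obtain Corollary~\ref{CY} as a direct specialisation of Theorem~\ref{mainthm1}. Concretely, I would apply that theorem to the nef and big Weil divisor $H':=lH$ together with a carefully chosen effective Weil divisor $L$, and then convert the resulting estimates for $H'$ into estimates for $H$ by homogeneity of the volume function.

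First I would check that $L:=0$ is an admissible choice for the effective divisor in Theorem~\ref{mainthm1}. The zero divisor is an effective Weil divisor, and since $X$ is $\epsilon$-lc it is $\mathbb{Q}$-Gorenstein, so $K_X$, and hence $-K_X$, is $\mathbb{Q}$-Cartier; from $K_X\equiv 0$ we get $L-K_X=-K_X\equiv 0$, which is in particular nef. Thus $L=0$ satisfies Condition (ii) of Theorem~\ref{mainthm1}. I would deliberately avoid Condition (i): for an $\epsilon$-lc Calabi-Yau variety $|{-K_X}|\ne\emptyset$ is not automatic and $-K_X$ need not even be integral, whereas Condition (ii) asks only for nefness of $L-K_X$, which numerical triviality gives for free.

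Then I would feed $(X,\ H':=lH,\ L:=0)$ into Theorem~\ref{mainthm1}. Since $\varphi_{H'}=\varphi_{lH}$, the dichotomy on $\dim\overline{\varphi_{lH}(X)}$ in the corollary is exactly the dichotomy on $\dim\overline{\varphi_{H'}(X)}$ in the theorem. In case (1), when $\dim\overline{\varphi_{lH}(X)}=n$ and $\varphi_{lH}$ is not birational, Theorem~\ref{mainthm1}(1) yields $\vol(lH)\geq 2$ and that $|0+m\cdot lH|=|mlH|$ is birational for $m\geq n+1$; in case (2), when $\dim\overline{\varphi_{lH}(X)}=n-1$, Theorem~\ref{mainthm1}(2) yields $\vol(lH)\geq\frac{\epsilon}{n}$ and that $|mlH|$ is birational for $m\geq n+\lfloor\frac{2n}{\epsilon}\rfloor$. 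Finally, since the volume is homogeneous of degree $n$, $\vol(lH)=l^{n}\vol(H)$, and the volume inequalities become $\vol(H)\geq\frac{2}{l^{n}}$ and $\vol(H)\geq\frac{\epsilon}{nl^{n}}$ respectively, as claimed.

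I do not expect a genuine obstacle: the corollary is a formal consequence of Theorem~\ref{mainthm1} once one recognises that $L=0$ is a legitimate choice there. The only steps requiring a line of justification are precisely this point — that numerical triviality of $-K_X$ suffices for Condition (ii) — and the scaling identity $\vol(lH)=l^{n}\vol(H)$.
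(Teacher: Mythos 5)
Your argument is correct and is essentially identical to the paper's own proof: substitute $H\mapsto lH$ and $L=0$ into Theorem~\ref{mainthm1}, using that $L-K_X=-K_X\equiv 0$ is nef (Condition (ii)), then apply the homogeneity $\vol(lH)=l^n\vol(H)$. Nothing to flag.
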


Corollary \ref{general type} generalizes the result in \cite{CEW22}; Corollary \ref{Fano} and Corollary \ref{CY} are generalisations of the results in \cite{ZMZ23a}.
~\\

Another application of Theorem \ref{mainthm1} is to estimate the volume and birationality of a projective $\epsilon$-lc polarised surface. Notice that given a 
projective surface $S$ and a $\bQ$-Cartier Weil divisor $H$, if $h^0(H)\geq2$, then $\dim \overline{\varphi_H(S)}\geq 1$.

\begin{cor}\label{surface}
Let $S$ be a projective $\epsilon$-lc surface. Let $H$ be a nef and big Weil divisor on $S$ with $h^0(H)\geq2$ and $L$ be an effective Weil divisor satisfying one of the following conditions:
\begin{enumerate}
\item[(i)] $|L-K_S|\neq\emptyset$;
\item[(ii)]$L-K_S$ is nef.
\end{enumerate}

Then $\vol(H)\geq \frac{\epsilon}{2}$ and $|L+mH|$ is birational for $m\geq 2+\lfloor\frac{4}{\epsilon}\rfloor$.
\end{cor}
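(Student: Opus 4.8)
\textbf{Proof proposal for Corollary \ref{surface}.}
The plan is to reduce the statement directly to Theorem \ref{mainthm1} by checking its hypotheses for $n=2$. First I would observe that $H$ is a nef and big Weil divisor on the projective $\epsilon$-lc surface $S$, so $\varphi_H$ is defined and $\overline{\varphi_H(S)}$ makes sense; since $h^0(H)\geq 2$, the linear system $|H|$ is nonempty and not composed of a single point, hence $\dim\overline{\varphi_H(S)}\geq 1=n-1$. Thus exactly one of the two cases of Theorem \ref{mainthm1} applies: either $\dim\overline{\varphi_H(S)}=2=n$, or $\dim\overline{\varphi_H(S)}=1=n-1$.

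In the case $\dim\overline{\varphi_H(S)}=n=2$, I would split further. If $\varphi_H$ is birational, then trivially $|L+mH|$ is birational for every $m\geq 0$ (it contains the pullback of a birational system plus an effective divisor), and $\vol(H)\geq 1\geq \frac{\epsilon}{2}$ since $H$ is a nef and big Weil divisor on a surface, so $H^2$ is a positive integer. If $\varphi_H$ is not birational, then part (1) of Theorem \ref{mainthm1} gives $\vol(H)\geq 2\geq\frac{\epsilon}{2}$ and $|L+mH|$ birational for $m\geq n+1=3$, hence for $m\geq 2+\lfloor\frac{4}{\epsilon}\rfloor$ as soon as $2+\lfloor\frac{4}{\epsilon}\rfloor\geq 3$, i.e. $\lfloor\frac{4}{\epsilon}\rfloor\geq 1$, which holds because $\epsilon\leq 1$ (an $\epsilon$-lc variety has $\epsilon\le 1$). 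In the remaining case $\dim\overline{\varphi_H(S)}=n-1=1$, part (2) of Theorem \ref{mainthm1} applies verbatim with $n=2$, yielding $\vol(H)\geq\frac{\epsilon}{n}=\frac{\epsilon}{2}$ and $|L+mH|$ birational for $m\geq n+\lfloor\frac{2n}{\epsilon}\rfloor=2+\lfloor\frac{4}{\epsilon}\rfloor$, which is exactly the claimed bound.

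Assembling the cases, the weakest volume bound is $\frac{\epsilon}{2}$ (from the $(n-1)$-dimensional case), and the weakest birationality threshold is $m\geq 2+\lfloor\frac{4}{\epsilon}\rfloor$, again from the $(n-1)$-dimensional case; the $n$-dimensional cases only require the smaller threshold $m\geq 3$. The only genuine point to verify carefully — and the place I expect the mildest friction — is the elementary remark recorded just before the corollary, namely that $h^0(H)\ge 2$ forces $\dim\overline{\varphi_H(S)}\ge 1$: this is because a rational map to $\mathbb{P}^{h^0(H)-1}$ with $h^0(H)-1\ge 1$ and with big $H$ cannot be constant, as the moving part of $H$ would then be trivial, contradicting bigness. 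Everything else is a direct specialization of Theorem \ref{mainthm1}, so there is no substantial obstacle.
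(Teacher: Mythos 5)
Your proposal follows the same route as the paper: reduce directly to Theorem~\ref{mainthm1} via the observation that $h^0(H)\geq 2$ forces $\dim\overline{\varphi_H(S)}\geq 1$. You are actually more careful than the paper's one-line proof in one respect: you note that when $\dim\overline{\varphi_H(S)}=2$ and $\varphi_H$ is birational, neither part~(1) nor part~(2) of Theorem~\ref{mainthm1} literally applies, so a separate (easy) argument is needed. This is a genuine gap in the paper's phrasing that you correctly identify and fill.

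However, your justification for $\vol(H)\geq 1$ in that remaining case is incorrect as stated: you assert that $H^2$ is a positive integer because $H$ is a nef and big Weil divisor on a surface. On an $\epsilon$-lc surface a Weil divisor need not be Cartier, and then $H^2$ is only a positive rational number. For example, on $\bP(1,1,3)$, whose unique singular point is of type $\frac{1}{3}(1,1)$ with minimal log discrepancy $2/3$, the ample Weil divisor $\OO(1)$ satisfies $\OO(1)^2=1/3<1$. The conclusion $\vol(H)\geq 1$ is nevertheless true in the birational case, but the right argument is the one used in the paper's Step~4: pass to the resolution $\pi\colon X'\to S$ with $\pi^*H=M+N$ where $M$ is the base point free moving part, note that $\varphi_M$ is birational so $M^2=\deg\varphi_M\cdot\deg\varphi_M(X')\geq 1$, and conclude $\vol(H)=(\pi^*H)^2\geq M\cdot\pi^*H\geq M^2\geq 1$ by nefness. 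With that substitution your proposal is correct and complete; the remaining cases are direct specializations of Theorem~\ref{mainthm1}, exactly as you say. (A tiny additional imprecision: in the birational case $|L+mH|$ is birational for $m\geq 1$, not ``for every $m\geq 0$,'' but this does not affect the stated threshold.)
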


In the last section, we give some examples to show that in Theorem \ref{mainthm1} our estimations are almost optimal. 

\section{Preliminary}
\subsection{Basic definitions} We recall some basic definitions in birational geometry in this subsection.

\begin{definition}[Volume]
Let $X$ be a projective variety of dimension $n$ and $D$ be a $\bQ$-divisor in $X$. Define the  {\it volume} of $D$ as 
\begin{equation*}
    {\rm vol}(D)=\varlimsup_{m\to \infty}\frac{n!h^0(\lfloor mD\rfloor)}{m^n}.
\end{equation*}
If $D$ is nef, then $\vol(D)=D^n$.
\end{definition}

\begin{definition}[Singularities] Let $X$ be a projective  normal variety such that $K_X$ is $\bQ$-Cartier. Let $\XX \to X$ be a resolution of $X$ and $K_{\XX}+B_{\XX}$ be the pullback of $K_X$. The log discrepancy of a prime divisor $D$ on $\XX$ is defined as 
\begin{equation*}
a(D,X,0):=1-\mu_D B_{\XX}.
\end{equation*}
The minimal log discrepancy of $X$ at a point $x$ is defined as
\begin{equation*}
\mld(X\ni x):=\min\{a(D,X,0)|\text{$D$ is a divisor on $\XX$ with center $\{x\}$}\},
\end{equation*}
and the minimal log discrepancy of $X$ is defined as
\begin{equation*}
\mld(X):=\min\{a(D,X,0)|\text{$D$ is a divisor on $\XX$}\},
\end{equation*}
Given $\epsilon\in (0,1]$, we say that $X$ is $\epsilon$-lc if $\mld(X)\geq \epsilon$.
\end{definition}

\begin{definition}[Rational map defined by a Weil divisor]\label{rational map}

Given a projective variety $X$ of dimension $n$ and an effective $\bQ$-Cartier Weil divisor $H$ on $X$ with $h^0(X,H)\geq2$, we define the rational map $\varphi_{H}$ as the following:

By Hironaka’s big theorem, we can take successive blow-ups $\pi:\XX \to X$ such that:
\begin{itemize}
\item $\XX$ is nonsingular projective;
\item $\pi^*(H)=M+N$ where $M$ is the base point free moving part and $N$ is the fixed part. 
\item the union of the support of $\pi^*(H)$ and the exceptional divisors of
$\pi$ is simple normal crossings.
\end{itemize}

Since $|M|$ is base point free, it induces a morphism $\mu: \XX \to T$. Hence we define $\varphi_{H}$ as the induced rational map $\varphi_{H}:X \dashrightarrow T$ and define $\overline{\varphi_{H}(X)}:=T$.
\end{definition}

\subsection{Moving part}
The following lemma is useful to compare the moving part of a  linear system with the counterpart of its restriction.
\begin{lem}\label{moving part}\cite[lemma2.7]{CM01b}
Let X be a smooth projective variety of
dimension $\geq 2$. Let $D$ be a divisor on $X$, $h^0(X,\OO_X(D))\geq 2$ and $S$ be a smooth irreducible divisor on $X$ such that $S$ is not a fixed component of $|D|$. Denote by $M$ the movable part of $|D|$ and by $N$ the movable part of $|D|_S|$ on $S$. Suppose the natural restriction map 
\begin{equation*}
 H^0(X,\OO_X(D))\xrightarrow{\theta}H^0(S,\OO_S(D|_S)) 
\end{equation*}
is surjective. Then $M|_S\geq N$ and thus 
\begin{equation*}
    h^0(S,\OO_S(M|_S))=h^0(S,\OO_S(N))=h^0(S,\OO_S(D|_S)).
\end{equation*}
\end{lem}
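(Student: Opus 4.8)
The plan is to reduce the statement to the elementary fact that, under the hypotheses, the fixed part of the complete linear system $|\OO_S(D|_S)|$ on $S$ contains $F|_S$, where $F$ denotes the fixed part of $|D|$ and $M\sim D-F$ its movable part. Since $S$ is not a fixed component of $|D|$ we have $\mult_S F=0$, so $F|_S$ is a genuine effective divisor on $S$; replacing $D$ by a member of $|D|$ not containing $S$ (harmless, since $D|_S$ depends only on the linear equivalence class) we may moreover assume $S\not\subseteq\Supp D$, so that $D|_S$ is an honest effective divisor and $\theta$ is literally restriction of sections. The assumption $\dim X\geq 2$ enters only to make $S$ a positive-dimensional variety on which these linear-system manipulations are meaningful.

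The key step will be the observation that, by surjectivity of $\theta$, the restriction of the linear system $|D|$ to $S$ — that is, the family $\{D'|_S : D'\in|D|,\ S\not\subseteq\Supp D'\}$ — is the full complete linear system $|\OO_S(D|_S)|$. On the other hand, writing a general $D'\in|D|$ as $D'=M'+F$ with $M'\in|M|$ and $S\not\subseteq\Supp M'$ (possible because $|M|$ has no fixed component, so $S$ is not a fixed component of $|M|$), one gets $D'|_S=M'|_S+F|_S$, i.e. the restricted system is $F|_S$ plus the restriction of $|M|$ to $S$. Comparing the two descriptions, $|\OO_S(D|_S)|=F|_S+\bigl(|M|\text{ restricted to }S\bigr)$; hence the fixed part of $|\OO_S(D|_S)|$ is $\geq F|_S$, and its movable part $N$ equals the movable part of $|M||_S$. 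In particular, for a general $M\in|M|$ and a compatible general member $N$, we have $M|_S\geq N$, which is the first assertion.

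For the cohomological consequences, I would recall that $N$ is by construction the movable part of the complete linear system $|\OO_S(D|_S)|$, so $h^0(S,\OO_S(N))=h^0(S,\OO_S(D|_S))$; combining this with the chain $N\leq M|_S\leq D|_S$ of effective divisors on $S$ yields
\begin{equation*}
h^0(S,\OO_S(N))\leq h^0(S,\OO_S(M|_S))\leq h^0(S,\OO_S(D|_S))=h^0(S,\OO_S(N)),
\end{equation*}
forcing equality throughout. I do not expect a genuine obstacle here: the argument is soft, and the only point requiring care is the bookkeeping of restrictions to $S$ — keeping track of which divisors contain $S$ and which do not, so that $D|_S$, $M'|_S$, $F|_S$ are honest effective divisors and the decomposition $D'=M'+F$ restricts cleanly; this is precisely where the hypothesis that $S$ is not a fixed component of $|D|$ is used.
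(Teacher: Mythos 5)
The paper cites this lemma from [CM01b] without reproducing a proof, so there is no in-text argument to compare against; what can be said is whether your argument is correct. It is, and it is the standard proof of this fact. Your two key observations are exactly right: (a) surjectivity of $\theta$ identifies $|\OO_S(D|_S)|$ with the set of restrictions $D'|_S$ over all $D'\in|D|$ not containing $S$, and (b) writing $D'=M'+F$ with $F$ the fixed part of $|D|$ and $M'$ a general member of $|M|$ gives $D'|_S=M'|_S+F|_S$, whence the fixed part $\Phi$ of $|\OO_S(D|_S)|$ satisfies $\Phi\geq F|_S$ and $N=(M'|_S+F|_S)-\Phi\leq M'|_S$, i.e.\ $M|_S\geq N$ up to linear equivalence. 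The cohomological equalities then follow from $N\leq M|_S\leq D|_S$ together with the general fact that subtracting the fixed part preserves $h^0$. The one place worth making explicit, which you gesture at but could state more sharply, is the support bookkeeping: $S\not\subseteq\Supp F$ by hypothesis and $S\not\subseteq\Supp M'$ for general $M'$ because $|M|$ has no fixed component, so $S\not\subseteq\Supp D'$ and the decomposition restricts term-by-term; this is exactly where ``$S$ is not a fixed component of $|D|$'' is used. No gap.
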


\subsection{Projection formula}
We need the following projection lemma to control the moving part of a Cartier divisor in a resolution:
\begin{lem}\label{projection}\cite[Lemma 2.3]{CM11}
Let $X$ be a normal projective variety and $D$ be a $\bQ$-Cartier Weil divisor. Let $\pi:\XX \rightarrow X$ be a resolution of singularities. Assume that $E$ is an effective exceptional $\bQ$-divisor on $\XX$ such that $\pi^*(D)+E$ is a Cartier divisor on $\XX$. 

Then
\begin{equation*}
    \pi_*\OO_{\XX}(\pi^*(D)+E)=\OO_X(D), 
\end{equation*}
where $\OO_X(D)$ is the reflexive sheaf corresponding to the Weil divisor $D$.
\end{lem}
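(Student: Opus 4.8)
Here is how I would prove Lemma~\ref{projection}.

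The statement is local on $X$, so I would reduce at once to the case $X=\operatorname{Spec}A$ with $A$ a normal noetherian domain and $K:=K(X)=K(X')$ (equality of function fields because $\pi$ is birational). Under this reduction both sheaves become $A$-submodules of the constant sheaf $K$. By definition, $\OO_X(D)$ is the reflexive module
\[
M_D=\{\,f\in K^{\times}:\operatorname{div}_X(f)+D\geq 0\,\}\cup\{0\},
\]
and, since $\pi^*(D)+E$ is a Cartier divisor on the smooth variety $X'$, the sheaf $\pi_*\OO_{X'}(\pi^*(D)+E)$ is, as an $A$-module, the space of global sections $\Gamma(X',\OO_{X'}(\pi^*(D)+E))=\{\,f\in K^{\times}:\operatorname{div}_{X'}(f)+\pi^*(D)+E\geq 0\,\}\cup\{0\}=:N$. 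So the whole problem becomes the identity of submodules $M_D=N\subseteq K$.

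Next I would record one standard geometric fact: since $X$ is normal and $\pi$ is proper and birational, $\pi$ restricts to an isomorphism over an open set $U\subseteq X$ with $\operatorname{codim}_X(X\setminus U)\geq 2$. Consequently every $\pi$-exceptional prime divisor of $X'$ maps into $X\setminus U$, while every prime divisor $W$ of $X$ has a unique, necessarily non-exceptional, strict transform $\widetilde W$ on $X'$, and $\pi$ identifies the discrete valuation rings $\OO_{X',\widetilde W}\cong\OO_{X,W}$. In particular $\operatorname{ord}_{\widetilde W}(\pi^*D)=\operatorname{ord}_W(D)$ for every prime divisor $W$ of $X$, whereas $\operatorname{ord}_{\widetilde W}(E)=0$ because $\operatorname{Supp}E$ is $\pi$-exceptional.

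With this in place the two inclusions are short. For $M_D\subseteq N$: if $f\in M_D$ then $D':=\operatorname{div}_X(f)+D$ is an effective $\bQ$-Cartier divisor, hence its pullback $\pi^*D'=\operatorname{div}_{X'}(f)+\pi^*D$ is again effective (the pullback of an effective $\bQ$-Cartier divisor is effective: a Cartier multiple $mD'$ corresponds to an inclusion of line bundles $\OO_X(-mD')\hookrightarrow\OO_X$, which remains injective after applying $\pi^*$ on the integral scheme $X'$), and adding the effective divisor $E$ preserves effectivity, so $f\in N$. For $N\subseteq M_D$: if $f\in N$, then for every prime divisor $W$ of $X$, testing the inequality $\operatorname{div}_{X'}(f)+\pi^*D+E\geq 0$ along $\widetilde W$ gives $\operatorname{ord}_W(f)+\operatorname{ord}_W(D)=\operatorname{ord}_{\widetilde W}(f)+\operatorname{ord}_{\widetilde W}(\pi^*D)+\operatorname{ord}_{\widetilde W}(E)\geq 0$; since this holds for all $W$ we get $\operatorname{div}_X(f)+D\geq 0$, i.e. $f\in M_D$. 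Hence $M_D=N$, which is the assertion.

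The only point that needs genuine care, and which I regard as the crux, is the codimension-one bookkeeping: one invokes normality of $X$ precisely to know that $\pi$ is an isomorphism over a big open set, so that the prime divisors of $X$ match the non-exceptional prime divisors of $X'$ with equal valuations, and one must see why the hypotheses on $E$ (effective, $\pi$-exceptional, with $\pi^*(D)+E$ integral and Cartier) are exactly what is needed --- $E$ contributes nothing along non-exceptional divisors, so it does not disturb the comparison with $D$, while its effectivity lets it absorb the $\pi$-exceptional part of $\operatorname{div}_{X'}(f)+\pi^*D$. Everything else (the reduction to the affine case, the description of sections of a line bundle on a normal variety as a module of rational functions, and effectivity of pullbacks of effective $\bQ$-Cartier divisors) is routine, and I would not expand on it.
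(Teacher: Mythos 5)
The paper states this lemma only as a citation of \cite[Lemma 2.3]{CM11} and gives no proof of its own, so there is no internal argument to compare against. Your proof is correct: it is the standard valuative comparison of reflexive sheaves viewed as $\OO_X$-submodules of the constant sheaf of rational functions, exploiting that normality of $X$ makes $\pi$ an isomorphism in codimension one so that orders along strict transforms agree with orders on $X$; effectivity of $E$ (together with the fact that pullbacks of effective $\bQ$-Cartier divisors remain effective) gives the inclusion $\OO_X(D)\subseteq\pi_*\OO_{\XX}(\pi^*(D)+E)$, and exceptionality of $E$ gives the reverse inclusion. This is exactly the argument one would find in the cited source, and you have correctly isolated where each hypothesis on $E$ is used.
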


\begin{remark}
Especially, we conclude that $h^0(\XX,\pi^*(D)+E)=h^0(X,D)$, hence $\text{Mov}|\pi^*(D)+E|\leq \pi^*(D)$.
\end{remark}

\section{Birationality Principle}

In this section we introduce a useful method to prove birationality of a linear system.
\begin{definition}\cite[Definition 2.3]{CZ08}
A \textbf{generic irreducible element} $S$ of a movable linear system $|M|$ on a variety $X$ is a generic irreducible component in a general member of $|M|$. 
\end{definition}
\begin{remark}\label{gie}
By definition one can easily see that 
\begin{enumerate}
\item if dim $\overline{\varphi_{|M|}(X)}\geq 2$, then $S$ is a general member of $|M|$;
\item If dim $\overline{\varphi_{|M|}(X)}=1$ (i.e.$|M|$ is composed with a pencil), then $M\equiv tS$ for some integer $t\geq h^0(M)-1$.
\end{enumerate}
\end{remark}

\begin{definition}\cite[Definition 2.6]{Exp3}
Let $|M|$ be a movable linear system on a variety $X$. We say $|M|$ distinguishes two different generic irreducible elements $S_1,S_2$ if $ \overline{\varphi_{|M|}(S_1)}\not= \overline{\varphi_{|M|}(S_2)}$.
\end{definition}

We will frequently use the following birational principle in \cite[Section 2.7]{Exp2} to prove birationality.

\begin{prop}(Birationality Principle)\label{BP}. Let $D$ and $M$ be two divisors on a smooth projective variety $X$. Assume that $|M|$ is base point free. Take the Stein factorization of $\varphi_{|M|}: X\xrightarrow{f} W\rightarrow \bP^{h^0(X,M)-1}$, where $f$ is a fibration onto a normal variety $W$. For a sublinear system $V \subset |D+M|$, the rational map $\varphi_V$ is birational onto its image if 
one of the following conditions satisfies:
\begin{enumerate}
\item $\dim \varphi_{|M|}(X)\geq 2, |D|\not= \emptyset$ and $\varphi_V|_S$ is birational for a general member $S$ of $|M|$;
\item  $\dim \varphi_{|M|}(X)=1$, $\varphi_V$ distinguishes general fibers of $f$ and $\varphi_V|_F$ is birational for a general fiber $F$ of $f$.
\end{enumerate}
\end{prop}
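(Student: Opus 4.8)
The plan is to deduce birationality of $\varphi_V$ from \emph{generic injectivity}: over $\mathbb{C}$, a rational map between projective varieties of the same dimension that is injective on a dense open set is birational onto its image. So it suffices to exhibit a dense open $U\subseteq X$ on which $\varphi_V$ is injective; equivalently, for a general point $x\in X$ and any $y\ne x$ with $\varphi_V(y)=\varphi_V(x)$ one must derive a contradiction. Observe that $\varphi_V(x)=\varphi_V(y)$ means exactly that for every section $s\in V$ one has $s(x)=0$ if and only if $s(y)=0$, and likewise after restriction to any subvariety through $x$ and $y$. The common strategy in both cases is: first force $y$ onto the same ``slice'' as $x$ — a general member $S\in|M|$ in case (1), a general fibre $F$ of $f$ in case (2) — and then invoke the hypothesis that $\varphi_V$ restricted to that slice is birational to conclude $y=x$. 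The passage from ``injective on a dense open of each general slice'' to ``injective on a dense open of $X$'' is a routine generality argument that I will not belabour.

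For case (2), $\dim\varphi_{|M|}(X)=1$, the map $f\colon X\to W$ is a fibration onto a curve. Take $x$ general, lying on a general fibre $F=F_x$, and suppose $\varphi_V(y)=\varphi_V(x)$ with $y\ne x$. If $y\notin F_x$, then $y$ lies on another general fibre $F'$, and $x,y$ are general points of $F_x,F'$ respectively. Since $\varphi_V$ distinguishes general fibres, $\overline{\varphi_V(F_x)}\ne\overline{\varphi_V(F')}$; since $\varphi_V|_{F_x}$ and $\varphi_V|_{F'}$ are birational onto their images, both images have dimension $\dim X-1$, so they meet along a subset of dimension $\le\dim X-2$. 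As $x,y$ are general on their fibres, $\varphi_V(x)=\varphi_V(y)$ would force this common value into that small intersection, which is impossible. Hence $y\in F_x$; then birationality of $\varphi_V|_{F_x}$ and the fact that $x$ is a general point of $F_x$ force $y=x$, a contradiction. So $\varphi_V$ is generically injective.

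For case (1), $\dim\varphi_{|M|}(X)\ge2$, take $x$ general and suppose $\varphi_V(y)=\varphi_V(x)$ with $y\ne x$. The first step is to show $f(x)=f(y)$ in $W$: fixing $D_0\in|D|$ (possible since $|D|\ne\emptyset$), adding $D_0$ to members of $|M|$ produces inside $|D+M|$ sections that detect $\varphi_{|M|}$; concretely, for a general member $S\in|M|$ through $x$ there is a section of $V$ vanishing identically on $S$, which then vanishes at $x$, hence at $y$, forcing $y\in S$; letting $S$ vary over general members through $x$ and using $\dim W\ge2$ to separate $f(x)$ from any other point of $W$, one gets $f(y)=f(x)$. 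Now the members of $|M|$ through $x$ form a sublinear system identified with the hyperplanes of $\overline{\varphi_{|M|}(X)}\subseteq\mathbb{P}^{\,h^0(X,M)-1}$ through $f(x)$; since $\dim W\ge2$, a general such member $S$ is a generic irreducible element (a prime divisor, by Bertini, as its only base point in $W$ is $f(x)$), it is a general member of $|M|$, it contains both $x$ and $y$ because $f(x)=f(y)$, and $x$ is a general point of $S$. By hypothesis $\varphi_V|_S$ is birational onto its image, and $\varphi_V(x)=\varphi_V(y)$ implies $\varphi_V|_S(x)=\varphi_V|_S(y)$; since $x$ is general in $S$ this forces $y=x$, a contradiction. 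Hence $\varphi_V$ is generically injective, and therefore birational onto its image.

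The main obstacle is the first reduction in case (1): showing that $\varphi_V$ ``remembers'' $\varphi_{|M|}$, i.e. that $\varphi_V(x)=\varphi_V(y)$ forces $f(x)=f(y)$. This is precisely where the hypothesis $|D|\ne\emptyset$ enters — in case (2) the corresponding fact is supplied directly by the hypothesis that $\varphi_V$ distinguishes general fibres. The other delicate point, the Bertini-type irreducibility of a general member of $|M|$ through a general point (used to legitimately apply the restriction hypothesis), genuinely relies on $\dim W\ge2$; everything else is bookkeeping with general points.
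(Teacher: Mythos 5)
Your approach — prove generic injectivity by forcing the partner point $y$ onto the same slice as a general $x$, then invoke birationality of $\varphi_V$ on that slice — is the natural one, and case (2) is sound once the deferred ``generality of $y$'' is supplied: look at a component of $\overline{\{(x,y):x\neq y,\ \varphi_V(x)=\varphi_V(y)\}}\subset X\times X$ dominating $X$ under both projections, so that at a general point of it both coordinates are general in $X$.

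The genuine gap is in case (1), at the step ``for a general member $S\in|M|$ through $x$ there is a section of $V$ vanishing identically on $S$.'' You produce such a section inside $|D+M|$ (namely $D_0+S$), but there is no reason it lies in the \emph{sublinear} system $V$, and without it the proposition as worded is false. Counterexample: $X=\mathbb{P}^2$, $M$ a line, $D=0$, $V$ the pencil of lines through a fixed point $p$. All three hypotheses of (1) hold — in particular, for a general line $S$ missing $p$, $V|_S$ is a base-point-free degree-one pencil on $S\cong\mathbb{P}^1$, so $\varphi_V|_S$ is an isomorphism — yet $\varphi_V$ is the projection from $p$, not even generically finite. What rescues the statement, and what this paper always supplies when invoking it, is the tacit hypothesis $V\supseteq D_0+|M|$ for some $D_0\in|D|$; your proof uses exactly this and must say so. Once that is in place, two smaller points remain: (a) the section vanishes on $D_0+S$, so you only get $y\in S\cup\operatorname{Supp}D_0$, and generality of $y$ (as above) is what discards $\operatorname{Supp}D_0$; (b) the intermediate claim ``$f(x)=f(y)$'' is not established — varying $S$ through $x$ only shows $\varphi_{|M|}(x)=\varphi_{|M|}(y)$, and the finite morphism $W\to\mathbb{P}^{h^0(X,M)-1}$ of the Stein factorization need not be injective — but it is also not needed, since $\varphi_{|M|}(x)=\varphi_{|M|}(y)$ already says a general $S\ni x$ also contains $y$, after which $\varphi_V|_S$ birational and $x$ general on $S$ force $y=x$.
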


\section{Proof of Theorem \ref{mainthm1}}
In this section we assume that $X$ is a projective $\epsilon$-lc variety of dimension $n$ and $H$ is a nef and big Weil divisor such that $$\dim \overline{\varphi_{H}(X)}=d\geq n-1.$$ 

Step 1: In this step we setup basic notations and make some assumptions.

Taking a small $\bQ$-factorialisation we can assume that $X$ is $\bQ$-factorial. Let $\pi:\XX \to X$ be a sufficiently high resolution such that $\pH=M+N$, where $|M|$ is the base point free movable part, $N$ is the fixed part. Take $Z_n=\XX$.

Inductively, for $2\leq k \leq n$ we can assume $Z_{k-1}$ as a generic irreducible element of $|M|_{Z_k}|$. By Bertini's theorem, we have the following chain of smooth projective subvarieties:
\begin{equation}\label{chain1}
    Z_1\subset \cdots \subset Z_{n-1} \subset Z_n=\XX.
\end{equation}

Since $Z_{k-1}$ is general, we can assume that 
\begin{enumerate}
\item $\pH|_{Z_j}$ is big for each $j=1,2,\cdots n-1$;
\item $\pH|_{Z_i}\equiv Z_{i-1}+N|_{Z_i}$ for $i=3,4,\cdots, n-1$ and $\pH|_{Z_2}\equiv \beta Z_1+N|_{Z_2}$ where $\beta \in \bN$ and $Z_{j-1}\notin \Supp (N|_{Z_j})$ for $j\geq2$.
\end{enumerate}
Replacing $\XX$ we can assume $\Supp(\pH), Z_i$ and exceptional divisors are simple normal crossing.

Step 2: In this step and the next step, we reduce the problem to the curve $Z_1$ and get our key inequality \eqref{eq4.4}, which is essential to estimate the lower bound of $\deg_{Z_1}\pH$ and hence estimate $\vol(H)$ and birationality.

If $m>n-2+\frac{1}{\beta}$, then
$$m\pH-N-Z_{n-1}\equiv(m-1)\pH$$
is a nef and big $\bQ$-divisor with simple normal crossing fractional part.

By Kawamata-Viehweg vanishing theorem\cite{vanish2,vanish1},
 \begin{align*}
 |K_{\XX}+\lceil m\pH\rceil||_{Z_{n-1}}&\succcurlyeq |K_{\XX}+\lceil m\pH-N\rceil||_{Z_{n-1}}\\&\succcurlyeq |K_{Z_{n-1}}+\lceil(m\pH-Z_{n-1}-N)|_{Z_{n-1}}\rceil|
 \end{align*}
By induction, for $i=3,\cdots,n-1$, we have
\begin{align*}
&\indent |K_{\XX}+\lceil m\pH\rceil||_{Z_{i-1}}\\
 &\succcurlyeq |K_{Z_{i}}+
 \lceil m\pH|_{Z_{i}}-\sum_{k=i+1}^n(Z_{k-1}|_{Z_i}+N|_{Z_i})\rceil||_{Z_{i-1}}\\
    &\succcurlyeq |K_{Z_{i-1}}+\lceil m\pH|_{Z_{i-1}}-\sum_{k=i}^n(Z_{k-1}+N|_{Z_{k-1}})|_{Z_{i-1}}\rceil|
 \end{align*}
Hence 
\begin{align*}
    &\indent |K_{\XX}+\lceil m\pH\rceil||_{Z_{1}}\\
 &\succcurlyeq|K_{Z_2}+\lceil m\pH|_{Z_2}-\sum_{k=3}^n(Z_{k-1}+N|_{Z_{k-1}})\rceil||_{Z_1} \\
    &\succcurlyeq |K_{Z_1}+\lceil m\pH|_{Z_1}-\sum_{k=3}^n(Z_{k-1}+N|_{Z_{k-1}})|_{Z_1}-(Z_1|_{Z_1}+\frac{1}{\beta}N|_{Z_1})\rceil|
 \end{align*}
 
 Step 3: Define
 $$M_m:=\text{Mov}|K_{\XX}+\lceil m\pH\rceil|,$$
 $$P_m:=m\pH|_{Z_1}-\sum_{k=3}^n(Z_{k-1}+N|_{Z_{k-1}})|_{Z_1}-(Z_1|_{Z_1}+\frac{1}{\beta}N|_{Z_1}).$$
Since
$$P_m\equiv(m-(n-2+\frac{1}{\beta}))\pH|_{Z_1},$$
we have $\deg_{Z_1}(P_m)=\deg_{Z_1}(m-(n-2+\frac{1}{\beta}))\pH$.

By Lemma \ref{moving part}, 
\begin{equation}\label{eq4.2}
    M_m|_{Z_1}\geq \text{Mov}|K_{Z_1}+\lceil P_m\rceil|
\end{equation}
 for any $m>n-2+\frac{1}{\beta}$.

On the other hand, we can write $K_{\XX}=\pi^*(K_X)+E-F$ where $E,F$ are effective exceptional $\bQ$-divisors with no common components. Therefore,
\begin{equation*}
    \begin{aligned}
    K_{\XX}+\lceil m\pH\rceil&=\pX+E-F+\lceil m\pH\rceil\\&= \pX+m\pH+E+\{-m\pH\}-F.
    \end{aligned}
\end{equation*}
Since $E+\{-m\pH\}$ is an effective exceptional $\bQ$-divisor, by Lemma \ref{projection},
\begin{equation*}
\begin{aligned}
 \pi_*\OO_{\XX}(K_{\XX}+\lceil m\pH\rceil)&\subseteq \pi_*\OO_{\XX}(\pX+m\pH+E+\{-m\pH\})\\&=\OO_X(K_X+mH).
\end{aligned}
\end{equation*}
Hence
\begin{equation}\label{eq4.3}
    M_m\leq \pX+m\pH.
\end{equation}

Combine \eqref{eq4.2},\eqref{eq4.3} and we conclude that 
\begin{equation}\label{eq4.4}
(\pX+m\pH)|_{Z_1}\geq \text{Mov}|K_{Z_1}+\lceil P_m\rceil|.
\end{equation}

In the remaining of this section we divide into two cases, the first case is $\dim\overline{\varphi_{H}(X)}=n$, which is relatively simple. The second case is $\dim\overline{\varphi_{H}(X)}=n-1$, which is much more tricky and take the most length in this section.

\subsection{Case 1: \texorpdfstring{$\dim \overline{\varphi_{H}(X)}=n$}. and \texorpdfstring{$\varphi_H$}. is not birational}\label{dim=n}
In this case, we have $\beta=1$ as in Remark \ref{gie}.

Step 4: In this step we estimate $\deg_{Z_1}\pH$ and $\vol(H)$.

Since $\varphi_H$ is not birational, $\varphi_M$ is not birational, hence $\deg \varphi_M\geq2$ and 
\begin{equation*}
M^n=\deg \varphi_M\cdot\deg \varphi_M(\XX)\geq 2.
\end{equation*}
Since $\pH$ and $M$ are nef, we have the following inequalities:
$$\deg_{Z_1}\pH=(\pH)^{n-1}\cdot Z_1\geq M^{n-1}\cdot Z_1=M^n\geq 2,$$
$$\vol(H)=\vol(\pH)\geq M^n\geq 2.$$

Step 5: In this step we consider the birationality of $|L+mH|$.

Recall that $L$ is an effective divisor. Applying Proposition \ref{BP} on chain \eqref{chain1} inductively implies that
\begin{equation*}
\begin{aligned}
\varphi_{|\pi^*(L+mH)|} \text{ is birational}& \iff  \varphi_{|\pi^*(L+mH)||_{Z_{n-1}}} \text{is birational}\\& \iff \cdots \iff \varphi_{|\pi^*(L+mH)||_{Z_1}} \text{is birational}
\end{aligned}
\end{equation*}

\subsubsection{Case 1.1: $|L-K_X|\neq \emptyset$.}\label{Case1.1}
In this subcase we can modify $L$ so that $L\geq K_X$. 
Since $Z_1$ is general, we can further assume that $\pi^*(L)|_{Z_1}\geq\pX|_{Z_1}$, hence $\varphi_{|\pi^*(L+mH)||_{Z_1}}$ is birational if $\varphi_{|\pi^*(K_X+mH)||_{Z_1}}$ is birational.

By \eqref{eq4.4}, $\varphi_{|\pi^*(K_X+mH)||_{Z_1}}$ is birational if and only if $\varphi_{|K_{Z_1}+\lceil P_{m}\rceil|}$ is birational. This is true when $\deg_{Z_1}P_m=\deg_{Z_1}(m-n+1)\pH>2$. Hence $\varphi_{|L+mH|}$ is birational for $m\geq n+1$.

\subsubsection{Case 1.2: $L-K_X$ is nef}\label{case1.2} 

In this subcase we can modify $X^\prime$ so that $\pi^*(L-K_X)$, $\pH, Z_i$ and exceptional divisors are simple normal crossing. Since $\pi^*(H)|_{Z_j}$ is nef and big for each $j=1,2,\cdots,n-1$ and $\pi^*(L-K_X)$ is nef, $\pi^*(L-K_X+mH)|_{Z_j}$ is nef and big for $j=1,2,\cdots,n-1$.

As the same with Step 2, by Kawamata-Viehweg vanishing theorem \cite{vanish2,vanish1} and induction, we have 
\begin{equation*}
|K_{\XX}+\lceil\pi^*(L-K_X+mH)\rceil||_{Z_1}\succcurlyeq|K_{Z_1}+\lceil \pi^*(L-K_X)|_{Z_1}+P_m\rceil|.
\end{equation*}

By Lemma \ref{moving part}, we have 
\begin{equation*}
\text{Mov}|K_{\XX}+\lceil \pi^*(L-K_X+mH)\rceil||_{Z_1}\geq \text{Mov}|K_{Z_1}+\lceil\pi^*(L-K_X)|_{Z_1}+P_m\rceil|.
\end{equation*}

As in Step 3, we have
\begin{equation}\label{eq4.5}
\text{Mov}|K_{\XX}+\lceil\pi^*(L-K_X+mH)\rceil|\leq \pi^*(L+mH).
\end{equation}

Hence $\varphi_{|\pi^*(L+mH)||_{Z_1}}$ is birational if $\varphi_{|K_{Z_1}+\lceil\pi^*(L-K_X)|_{Z_1}+P_m\rceil}$ is birational. Since
\begin{equation*}
\deg_{Z_1}\pi^*(L-K_X)|_{Z_1}+P_m
\geq \deg_{Z_1}P_m
=\deg_{Z_1}(m-n+1)\pi^*(H),  
\end{equation*}
this is true when $\deg_{Z_1}(m-n+1)\pi^*(H)>2$. Hence $\varphi_{|L+mH|}$ is birational for $m\geq n+1$.

\subsection{Case 2: \texorpdfstring{$\dim \overline{\varphi_{-lK_X}(X)}=n-1$}.}\label{dim=n-1}
The main difficulty in this case is that since $X$ may have singularities worse than canonical singularities, the exceptional divisors with log discrepancy less than $1$ may contribute negativity on $|K_{Z_1}+\lceil P_m\rceil|_{Z_1}|$, hence we cannot bound $\deg_{Z_1}\pH$ from below away from zero by just taking degree on \eqref{eq4.4} as in \cite{CEW22,ZMZ23a}. 

Therefore, we need to consider the components in $\pH$ more explicitly and this is the motivation of the following lemma:

\begin{lem}\label{component}
Let $X$ be a projective variety of dimension $n$ and $H$ be a nef and big $\bQ$-Cartier Weil divisor with $\dim \overline{\varphi_H(X)}=n-1$. Let $\pi:X^\prime \to X$ be a resolution such that $X^\prime$ is smooth and $\pH=M+N$, where $M$ is the base point free moving part  and $N$ is the fixed part. The linear system $|M|$ induces a morphism $\mu:X^\prime\to T$.

If $S$ is an $\pi$-exceptional prime divisor horizontal over $T$ (i.e. $\mu(S)=T$), then $S\subseteq \Supp N$.
\end{lem}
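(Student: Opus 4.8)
The plan is to argue by contradiction. I would suppose that $S$ is a $\pi$-exceptional prime divisor with $\mu(S)=T$ but $S\not\subseteq\Supp N$, and compare the two top intersection numbers $M^{n-1}\cdot S$ and $(\pH)^{n-1}\cdot S$: horizontality over $T$ should force the first to be strictly positive, $\pi$-exceptionality should force the second to vanish, while the inclusion $M\le\pH$ combined with $S\not\subseteq\Supp N$ should force $M^{n-1}\cdot S\le(\pH)^{n-1}\cdot S$ — a contradiction.

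For the positivity, I would note that $\dim S=\dim\XX-1=n-1=\dim T$, so $\mu(S)=T$ makes $\mu|_S\colon S\to T$ a surjective, generically finite morphism, say of degree $e\ge1$. Since $M$ is, by construction of $\mu$, the pullback under $\mu$ of an ample divisor $A$ on $T$, we have $M|_S=(\mu|_S)^*A$, and the projection formula gives $M^{n-1}\cdot S=(M|_S)^{n-1}=e\,A^{n-1}>0$, as $A^{n-1}>0$ on the $(n-1)$-dimensional variety $T$.

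For the vanishing, I would use that $S$ is $\pi$-exceptional, so $\dim\pi(S)\le n-2<\dim S$ and hence $\pi_*[S]=0$; choosing $m\in\bN$ with $mH$ Cartier and applying the projection formula for $\pi$ then yields $(\pH)^{n-1}\cdot S=\tfrac{1}{m^{n-1}}\bigl(\pi^*(mH)\bigr)^{n-1}\cdot S=0$. To close the argument, I would use $S\not\subseteq\Supp N$ to see that the prime divisor $S$ has no common component with the effective divisor $N$, so the cycle $N\cdot S$ is effective of dimension $n-2$; writing $\pH=M+N$ and expanding, $(\pH)^{n-1}\cdot S-M^{n-1}\cdot S=\sum_{i+j=n-2}(\pH)^{i}M^{j}\cdot(N\cdot S)\ge0$ because $\pH$ and $M$ are nef and $N\cdot S$ is effective. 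Hence $0<M^{n-1}\cdot S\le(\pH)^{n-1}\cdot S=0$, which is absurd, so $S\subseteq\Supp N$.

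The argument is short once the right pair of intersection numbers is identified, so there is no single hard step; the point I would be most careful about is the positivity claim — ensuring that $M|_S$ really is the pullback of an ample class along a generically finite surjection, so that its top self-intersection is positive — and, in the final comparison, the use of $S\not\subseteq\Supp N$ to guarantee that $N\cdot S$ is an effective cycle rather than merely a cycle class with possibly negative components.
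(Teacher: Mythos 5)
Your proof is correct and rests on the same three facts as the paper's: $\pH^{n-1}\cdot S=0$ from $\pi$-exceptionality, $M^{n-1}\cdot S>0$ from horizontality over $T$, and the comparison of these two using $\pH=M+N$ together with effectiveness of $N|_S$ and nefness of $\pH,M$. Only the bookkeeping differs: the paper isolates the first index $t$ at which $S\cdot\pH^{n-t-1}\cdot M^{t}$ becomes positive and extracts a contradiction from the single factor $\pH^{n-t}\cdot M^{t-1}=\pH^{n-t-1}\cdot M^{t-1}\cdot(M+N)$, whereas you collapse all such steps at once via the telescoping identity $\pH^{n-1}\cdot S-M^{n-1}\cdot S=N\cdot S\cdot\sum_{i+j=n-2}\pH^{i}M^{j}\ge 0$, which is a tidier way to reach the same contradiction. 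You also spell out why $M^{n-1}\cdot S>0$ (pullback of an ample class under the generically finite surjection $\mu|_S\colon S\to T$), a point the paper simply asserts.
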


\begin{proof}
Since $S$ is exceptional, $S\cdot \pH^{n-1}=\pi(S)\cdot H^{n-1}=0$ by projection formula. As $S$ is horizontal over $T$, $S\cdot M^{n-1}>0$. Let 
$$t=\max\{k\in \bN|S\cdot \pH^{n-i}\cdot M^{i-1}=0, 1\leq i\leq k\}.$$
Then 
\begin{equation*}
\begin{aligned}
0&=S\cdot \pH^{n-t}\cdot M^{t-1}\\&=S\cdot \pH^{n-t-1}\cdot M^{t-1}\cdot N+S\cdot \pH^{n-t-1}\cdot M^t.      
\end{aligned}
\end{equation*}

By definition of $t$,
$$S\cdot \pH^{n-t-1}\cdot M^t>0,$$
hence we have $$S\cdot \pH^{n-t-1}\cdot M^{t-1}\cdot N<0.$$

If $S\nsubseteq \Supp N$, then $N|_S \cdot \pH|_S^{n-t-1}\cdot M|_S^{t-1}\geq0$ since $N|_S$ is an effective divisor and $\pH|_S,M|_S$ are nef divisors, which is a contradiction.
\end{proof}

Step 4$^\prime$: In this step we estimate $\deg_{Z_1}\pH$ and $\vol(H)$.

The base point free linear system $|M|$ induces a morphism $\mu:\XX\to T$. Taking the Stein factorialisation, we can assume that $\mu$ has connected fiber. Hence $Z_1$ is the general fiber of $\mu$. 

Recall that $K_{\XX}=\pX+E-F$ where $E,F$ are effective exceptional $\bQ$-divisors with no common components. Assume that $F=\sum_{i=1}^la_iF_i$ where $F_1,\cdots,F_r$ are horizontal over $\overline{\varphi_H(X)}$, and $F_{r+1},\cdots,F_l$ are vertical over $\overline{\varphi_H(X)}$. Since $Z_1$ is general, $\Supp F_i \cap Z_1=\emptyset, i\geq r+1$. Hence
$$\Supp F\cap Z_1= \bigcup_{i=1}^r \Supp F_i \cap Z_1=:\{x_1,\cdots,x_c\}.$$ If $\Supp F\cap Z_1=\emptyset$, we let $c=0$.  Since $X$ is $\epsilon$-lc, $a_i\leq 1-\epsilon$. Since $Z_1$ is general, $F_i$ and $Z_1$ are simple normal crossing for $i\leq r$, hence $\mult_{x_i} F|_{Z_1}\leq 1-\epsilon$. Therefore $\deg_{Z_1}F\leq c(1-\epsilon)$.

By Lemma \ref{component}, $F_i\subseteq \Supp N\subseteq \Supp \pH$ for $i\leq r$. We can write $$\Supp \pH \cap Z_1=\{x_1,\cdots,x_c,x_{c+1},\cdots,x_d\}.$$ Since $\pH|_{Z_1}$ is big, $\Supp \pH\cap Z_1\neq \emptyset$, hence $d\geq 1$.

Let $\mult_{x_i}\pH|_{Z_1}=b_i=\frac{p_i}{q_i}$, where $(p_i,q_i)=1, i=1,2,\cdots,d$. Take $Q=\prod_{i=1}^d q_i$.

Recall that for $m\geq n-2+\frac{1}{\beta}$, we have the following inequality \eqref{eq4.4}
\begin{equation*}
    (\pX+m\pH)|_{Z_1}\geq \text{Mov}|K_{Z_1}+\lceil P_m\rceil|.
\end{equation*}

Since $\pH|_{Z_1}$ is big, $\deg_{Z_1}\pH>0$.
Taking $m=\lambda Q+n,\lambda\in \bN, \lambda\gg0$, we have $\deg_{Z_1}P_m=\deg_{Z_1}(m-(n-2+\frac{1}{\beta})\pH)\geq2$, which implies $$\text{Mov}|K_{Z_1}+\lceil P_m\rceil|=|K_{Z_1}+\lceil P_m\rceil|.$$
Taking degree on \eqref{eq4.4}, the left side is 
\begin{equation*}
\begin{aligned}
\deg_{Z_1} K_{Z_1}+\lceil P_m\rceil
&=\deg_{Z_1} ((K_{\XX}+(n-2)M)|_{Z_2}+Z_1)|_{Z_1}+\lceil P_m\rceil
\\&= \deg_{Z_1} \pX+E-F+\lceil (m-n+2-\frac{1}{\beta})\pH\rceil
\\&\geq \deg_{Z_1} \pX-F+\lceil(m-n+1)\pH\rceil
\\&\geq \deg_{Z_1} \pX+\lceil(m-n+1)\pH\rceil-c(1-\epsilon).
\end{aligned}
\end{equation*}

Therefore, we conclude 
\begin{equation}\label{eq4.6}
\deg_{Z_1} m\pH\geq \deg_{Z_1} \lceil(m-n+1)\pH\rceil-c(1-\epsilon)
\end{equation}

We claim that $\deg_{Z_1}\pH\geq \frac{\epsilon}{n}$. If not, since $$\deg_{Z_1}\pH=\sum_{i=1}^d \mult_{x_i}\pH|_{Z_1}=\sum_{i=1}^d b_i,$$ we have  $b_i<\frac{\epsilon}{n}, 1\leq i\leq d$.

Recall that $b_i=\frac{p_i}{q_i}, Q=\prod_{i=1}^d q_i$, hence $Qb_i\in \bZ$. Therefore,
\begin{equation*}
\begin{aligned}
&\indent \deg_{Z_1} \lceil(m-n+1)\pH\rceil-(m-n+1)\pH
\\&=\deg_{Z_1} \lceil(\lambda Q+1)\pH\rceil-(\lambda Q+1)\pH
\\&=\sum_{i=1}^d\lceil(\lambda Q+1)b_i\rceil-(\lambda Q+1)b_i
\\&=\sum_{i=1}^d\lceil b_i\rceil-b_i= d-\deg_{Z_1}\pH
\end{aligned}
\end{equation*}

By \eqref{eq4.6}, $\deg_{Z_1}n\pH\geq d-c(1-\epsilon)\geq \min\{1,\epsilon\}\geq\epsilon$ since $d\geq c$ and $d\geq1$, which is a contradiction. Hence $\deg_{Z_1}\pH\geq \frac{\epsilon}{n}$.

Since $\pH$ and $M$ are nef, 
\begin{equation*}
\vol(H)=\vol(\pH)\geq \pH\cdot M^{n-1}=\beta \deg_{Z_1}\pH\geq \frac{\epsilon}{n}.
\end{equation*}

Step 5$^\prime$: In this step we consider the birationality of $\varphi_{|L+mH|}$. 

Recall that $L$ is effective. Applying Proposition \ref{BP} on chain \eqref{chain1} inductively, $\varphi_{|L+mH|}$ is birational if and only if
\begin{enumerate}
\item[(I)] $\varphi_{|\pi^*(L+mH)||_{Z_2}}$ distinguishes different generic irreducible elements of $\varphi_{|M||_{Z_2}}$;
\item[(II)] $\varphi_{|\pi^*(L+mH)||_{Z_1}}$ is birational.
\end{enumerate}

\subsubsection{Case 2.1: $|L-K_X|\neq \emptyset$.}\label{Case2.1}
As in Subsection \ref{Case1.1}, Condition (II) is satisfied when $\deg P_m>2$. Hence it is sufficient to consider Condition (I).

 If $\beta=1$, then it is satisfied since $$|\pi^*(L+mH)||_{Z_2}\succcurlyeq|\pi^*H||_{Z_2}\succcurlyeq|M||_{Z_2}.$$ 
 
 If $\beta\geq 2$, choose two different generic irreducible elements $C_1$,$C_2$ of $|M||_{Z_2}$. $M|_{Z_2}-C_1-C_2\equiv(\beta-2)Z_1$ is nef. Therefore, for $m>n-2$, by Kawamata-Viehweg vanishing theorem \cite{vanish2,vanish1} we conclude 
 \begin{equation*}
\begin{aligned}
&\indent |K_{\XX}+\lceil m\pH\rceil||_{Z_2}\notag
\\&\succcurlyeq|K_{\XX}+\lceil(m-n+1)\pH\rceil+(n-1)M||_{Z_2}
\\&\succcurlyeq|K_{Z_2}+\lceil(m-n+1)\pH|_{Z_2}\rceil+M|_{Z_2}|
\end{aligned}     
 \end{equation*}
and the surjective map:
\begin{equation*}
\begin{aligned}
&\indent H^0(Z_2,K_{Z_2}+\lceil(m-n+1)\pH|_{Z_2}\rceil+M|_{Z_2})
\\&\rightarrow H^0(C_1,K_{C_1}+D_1)\oplus H^0(C_2,K_{C_2}+D_2)
\end{aligned}    
\end{equation*}
where
\begin{equation*}
 \begin{aligned}
 D_i:&=(\lceil(m-n+1)\pH|_{Z_2}\rceil+M|_{Z_2}-C_i)|_{C_i}\\&=\lceil(m-n+1)\pH|_{Z_2}\rceil|_{C_i}
\end{aligned}   
\end{equation*}
for $i=1,2$.

If 
$$(m-n+1)\deg_{C_i} \pH=(m-n+1)\deg_{Z_1} \pH>1,$$ then $\deg_{C_i}D_i\geq 2$, which implies $H^0(C_i,K_{C_i}+D_i)\not= 0$, hence $|K_{\XX}+\lceil m\pH\rceil||_{Z_2}$ can distinguish different generic irreducible elements of $|M||_{Z_2}$. By \eqref{eq4.3}, $|\pi^*(K_X+mH)||_{Z_2}$ can also distinguish different generic irreducible elements of $|M||_{Z_2}$. 

Since $|L-K_X|\neq \emptyset$, modifying $L$ we can assume $L\geq K_X$. Since $Z_2$ is a general member of $|M|_{Z_3}|$, $\pi^*(L)|_{Z_2}\geq \pX|_{Z_2}$, hence $|\pi^*(L+mH)||_{Z_2}$ can also distinguish different generic irreducible elements of $|M||_{Z_2}$. Therefore, Condition (I) is satisfied in this case.

In summary, if 
$$\deg_{Z_1} P_m=(m-(n-2+\frac{1}{\beta}))\deg_{Z_1}\pH>2,$$
$$(m-n+1)\deg_{Z_1}\pH>1,$$ then $\varphi_{|L+mH|}$ is birational. Since $\deg_{Z_1}\pH\geq \frac{\epsilon}{n}$ and $\beta\geq1$, both inequalities are satisfied when $m\geq n+\lfloor \frac{2n}{\epsilon}\rfloor$.

\subsubsection{Case 2.2: $L-K_X$ is nef.} As in Subsection \ref{case1.2}, Condition (II) is satisfied when $\deg_{Z_1} P_m>2$. Hence it is sufficient to consider Condition (I).

If $\beta=1$, then it is satisfied since $$|\pi^*(L+mH)||_{Z_2}\succcurlyeq|\pH||_{Z_2}\succcurlyeq|M||_{Z_2}.$$

If $\beta\geq 2$, choose two different generic irreducible elements $C_1,C_2$ of $|M||_{Z_2}$. As in Subsection \ref{Case2.1}, by Kawamata-Viehweg vanishing theorem \cite{vanish2,vanish1} we conclude
\begin{equation*}
\begin{aligned}
&\indent|K_{\XX}+\lceil\pi^*(L-K_X+mH)\rceil||_{Z_2}
\\&\succcurlyeq|K_{\XX}+\lceil\pi^*(L-K_X+(m-n+1)H))\rceil+(n-1)M||_{Z_2}
\\&\succcurlyeq|K_{Z_2}+\lceil\pi^*(L-K_X+(m-n+1)H))|_{Z_2}\rceil+M|_{Z_2}|
\end{aligned}
\end{equation*}
and the surjective map:
\begin{equation*}
\begin{aligned}
&\indent H^0(Z_2,K_{Z_2}+\lceil\pi^*(L-K_X+(m-n+1)H)|_{Z_2}\rceil+M|_{Z_2})
\\&\rightarrow H^0(C_1,K_{C_1}+D^\prime_1)\oplus H^0(C_2,K_{C_2}+D^\prime_2)
\end{aligned}    
\end{equation*}
where
\begin{equation*}
 \begin{aligned}
 D^\prime_i:&=(\lceil\pi^*(L-K_X+(m-n+1)H)|_{Z_2}\rceil+M|_{Z_2}-C_i)|_{C_i}\\&=\lceil\pi^*(L-K_X+(m-n+1)H)|_{Z_2}\rceil|_{C_i}
\end{aligned}   
\end{equation*}
for $i=1,2$.

Since $L-K_X$ is nef, $\deg_{C_i}\pi^*(L-K_X)\geq0$. If $$(m-n+1)\deg_{C_i}\pH=(m-n+1)\deg_{Z_1}\pH>1,$$
then $\deg_{C_i}D^\prime_i\geq 2$, which implies $H^0(C_i,K_{C_i}+D^\prime_i)\neq 0$, hence $|K_{\XX}+\lceil \pi^*(L-K_X+mH)\rceil||_{Z_2}$ can distinguish different generic irreducible elements of $|M||_{Z_2}$.

As in Step 3 inequality \eqref{eq4.3}, we have
\begin{equation*}
\text{Mov}|K_{\XX}+\lceil\pi^*(L-K_X+mH)\rceil|\leq \pi^*(L+mH).
\end{equation*}

Hence $|\pi^*(L+mH)||_{Z_2}$ can also distinguish different generic irreducible elements of $|M||_{Z_2}$. Therefore, Condition (I) is satisfied in this case.

In summary, if 
$$\deg_{Z_1} P_m=(m-(n-2+\frac{1}{\beta}))\deg_{Z_1}\pH>2,$$
$$(m-n+1)\deg_{Z_1}\pH>1,$$ then $\varphi_{|L+mH|}$ is birational. Since $\deg_{Z_1}\pH\geq \frac{\epsilon}{n}$ and $\beta\geq1$, both inequalities are satisfied when $m\geq n+\lfloor \frac{2n}{\epsilon}\rfloor$.

\section{Proof of Corollaries}
In this section we prove the corollaries.
\begin{proof}[Proof of Corollary \ref{general type}]
We take $H=lK_X$ and $L\in |lK_X|$, hence $H$ is nef and big, $L$ is effective and $L-K_X\equiv (l-1)K_X$ is nef. The corollary follows from Theorem \ref{mainthm1}.
\end{proof}

\begin{proof}[Proof of Corollary \ref{Fano}]
We take $H=-lK_X$ and $L=0$, hence $H$ is nef and big,  and $L-K_X=-K_X$ is nef. The corollary follows from Theorem \ref{mainthm1}.
\end{proof}

\begin{proof}[Proof of Corollary \ref{CY}]
In Theorem \ref{mainthm1}, we replace $H$ by $lH$ and take $L=0$, Since $lH$ is nef and big and $L-K_X\equiv0$, The corollary follows from Theorem \ref{mainthm1}.
\end{proof}

\begin{proof}[Proof of Corollary \ref{surface}]
Since $h^0(H)\geq 2$, by Definition \ref{rational map}, $\dim \overline{\varphi_H(S)}\geq1$, hence the corollary follows from Theorem \ref{mainthm1}.
\end{proof}

\section{Examples}\label{example}

In this section we give some examples to show that in Theorem \ref{mainthm1} our estimations are almost optimal.

Since our examples are hypersurfaces in weighted projective spaces, the involved singularities are cyclic quotient singularities. Hence we need the following basic lemma to compute the minimal log discrepancy of a cyclic quotient singularity. For a proof, we recommend the readers to \cite{Ambrotoricmld}.

\begin{lem}\label{toric mld}Let $(x\in X)=\frac{1}{r}(a_1,\cdots, a_n)$ be a cyclic quotient singularity, then
\begin{equation*}
\mld(X\ni x)=\min_{1\leq j\leq r}\sum_{i=1}^n(1+\frac{ja_i}{r}-\lceil\frac{ja_i}{r}\rceil).
\end{equation*}
\end{lem}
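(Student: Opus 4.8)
The plan is to translate the statement into toric geometry and reduce it to an elementary count of lattice points. Set $N=\bZ^{n}+\bZ\cdot\frac1r(a_{1},\dots,a_{n})\subset\bQ^{n}$ and let $\sigma=\sum_{i=1}^{n}\bR_{\ge0}e_{i}$ be the positive orthant. Then $(x\in X)$ is the affine toric variety $U_{\sigma}$ with respect to the lattice $N$, with $x$ its torus-fixed point, and $K_{X}=-\sum_{i=1}^{n}D_{e_{i}}$. Among toric divisors over $X$, those with center $x$ are exactly the $D_{v}$ attached to primitive lattice vectors $v$ in the relative interior of $\sigma$, and writing $v=\sum_{i}c_{i}e_{i}$ one has $a(D_{v},X,0)=\psi(v):=\sum_{i=1}^{n}c_{i}$, the linear function on $\sigma$ with $\psi(e_{i})=1$. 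By the toric description of minimal log discrepancies (\cite{Ambrotoricmld}), the infimum defining $\mld(X\ni x)$ is computed by such toric divisorial valuations, so
\[
\mld(X\ni x)=\min\bigl\{\psi(v)\ :\ v\in N\cap\operatorname{int}(\sigma)\ \text{primitive}\bigr\}
=\min\bigl\{\psi(v)\ :\ 0\neq v\in N\cap\operatorname{int}(\sigma)\bigr\},
\]
the second equality holding because $\psi$ is positively homogeneous of degree $1$, so scaling a non-primitive vector down keeps it in $N\cap\operatorname{int}(\sigma)$ and strictly decreases $\psi$; hence the minimum over all nonzero vectors is already attained at a primitive one.

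Next I would enumerate $N\cap\sigma$. Every $v\in N\cap\sigma$ is congruent modulo $\bZ^{n}$ to $v_{j}:=\bigl(\{\frac{ja_{1}}{r}\},\dots,\{\frac{ja_{n}}{r}\}\bigr)$ for some $j\in\{0,1,\dots,r-1\}$, where $\{t\}$ denotes the fractional part; writing $v=\sum_{i}c_{i}e_{i}$ this forces $c_{i}\in\{\frac{ja_{i}}{r}\}+\bZ_{\ge0}$. If moreover $v\in\operatorname{int}(\sigma)$, i.e.\ $c_{i}>0$ for all $i$, then the smallest admissible value of $c_{i}$ is $\{\frac{ja_{i}}{r}\}$ when $r\nmid ja_{i}$ and $1$ when $r\mid ja_{i}$; in both cases it equals $1+\frac{ja_{i}}{r}-\lceil\frac{ja_{i}}{r}\rceil$. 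Hence, within the residue class of $v_{j}$, the minimum of $\psi$ over $N\cap\operatorname{int}(\sigma)$ is $\sum_{i=1}^{n}\bigl(1+\frac{ja_{i}}{r}-\lceil\frac{ja_{i}}{r}\rceil\bigr)$, realized by the vector whose $i$-th coordinate is this smallest value. Taking the minimum over $j\in\{0,1,\dots,r-1\}$ — equivalently over $j\in\{1,\dots,r\}$, since $j=0$ and $j=r$ both contribute the value $n$ — gives $\mld(X\ni x)=\min_{1\le j\le r}\sum_{i=1}^{n}\bigl(1+\frac{ja_{i}}{r}-\lceil\frac{ja_{i}}{r}\rceil\bigr)$, which is the claim.

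The computational core — the passage from ``smallest positive representative of $\frac{ja_{i}}{r}$ modulo $\bZ$'' to the closed form $1+\frac{ja_{i}}{r}-\lceil\frac{ja_{i}}{r}\rceil$ (with attention to the cases $r\mid ja_{i}$ versus $r\nmid ja_{i}$), and the class-by-class minimization — is routine. The substantive input is the toric theory of minimal log discrepancies recalled in the first paragraph: the identification $a(D_{v},X,0)=\psi(v)$ together with the fact that $\mld(X\ni x)$ is computed by toric divisorial valuations. That is the only nontrivial ingredient, and it is exactly what \cite{Ambrotoricmld} provides.
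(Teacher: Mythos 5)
The paper itself gives no proof of this lemma; it simply refers the reader to \cite{Ambrotoricmld}, so there is no in-house argument to compare against. Your proof is correct and is the standard toric computation that the reference supplies: realize $X$ near $x$ as the affine toric variety attached to the orthant $\sigma$ and the lattice $N=\bZ^n+\bZ\cdot\tfrac1r(a_1,\dots,a_n)$; use the toric log-discrepancy formula $a(D_v,X,0)=\psi(v)$ for $v$ primitive in $\operatorname{int}(\sigma)$ with $\psi(e_i)=1$, together with Ambro's theorem that $\mld(X\ni x)$ is attained by such toric valuations, to reduce to minimizing $\psi$ over $N\cap\operatorname{int}(\sigma)\setminus\{0\}$; then fibre $N\cap\operatorname{int}(\sigma)$ by residue class $j\in\{0,\dots,r-1\}$ modulo $\bZ^n$ and compute that the fibrewise minimum is $\sum_{i=1}^n\bigl(1+\tfrac{ja_i}{r}-\lceil\tfrac{ja_i}{r}\rceil\bigr)$. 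Your case split $r\mid ja_i$ versus $r\nmid ja_i$ when identifying the smallest positive representative with the closed form is exactly right, and so is the remark that $j=0$ and $j=r$ both contribute $n$, so the index ranges $\{0,\dots,r-1\}$ and $\{1,\dots,r\}$ yield the same minimum. The one step you correctly single out as non-formal is the claim that the infimum of log discrepancies over all divisorial valuations with center $x$ is achieved by a toric one; that is precisely the content of \cite{Ambrotoricmld}, and you attribute it there.
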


\begin{example}
Given $n\geq2$, consider the general hypersurface
$$X=V_{2n+2}\subset \bP(1^{(n+1)},n+1).$$
$X$ is smooth and $\omega_X\cong \OO_X$. Let $L=0$ and $H\cong\OO_X(1)$. Then $\dim\overline{\varphi_{H}(X)}= n$ and $\varphi_H$ is not birational. We have $\vol(H)=2$ and $|mH|$ is birational for $m\geq n+1$.
\end{example}

\begin{example}
Given $N\geq n\geq2$, consider the general hypersurface 
$$X=V_{6N}\subset \bP(1^{(n)},2N,3N).$$
$X$ has cyclic quotient singularities of type $\frac{1}{N}(1^{(n)})$. Hence by Lemma \ref{toric mld}, $\mld(X)=\frac{n}{N}$. Denote $\epsilon=\frac{n}{N}$. Then $X$ is $\epsilon$-lc. Since $\omega_X\cong \OO_X(N-n)$, we can take the effective divisor $L\in |\OO_X(N-n)|$. Let $H\cong \OO_X(1)$. We have $\vol(H)=\frac{1}{N}=\frac{\epsilon}{n}$ and $|L+mH|$ is birational for $m\geq n+2N=n+\frac{2n}{\epsilon}$.
\end{example}

\section*{Acknowledgments} The author expresses his gratitude to his advisor Professor Meng Chen for his great support and encouragement. The author would like to thank Mengchu Li for correcting errors in the first version of this article. The author appreciates Wentao Chang, Hexu Liu, Mengchu Li and Yu Zou for useful discussions.

\bibliographystyle{alpha}
\bibliography{ref}

\end{document}